\newcommand{\Ext}{\operatorname{Ext}}
\newcommand{\Hom}{\operatorname{Hom}}
\newcommand{\rad}{\operatorname{rad}}
\newcommand{\Ker}{\operatorname{Ker}}
\newcommand{\sgn}{\operatorname{sgn}}
\newtheorem{theorem}{Theorem}[section]
\newtheorem{corollary}[theorem]{Corollary}
\newtheorem{lemma}[theorem]{Lemma}
\theoremstyle{definition}
\newtheorem{problem}[theorem]{Problem}
\numberwithin{equation}{section}
\theoremstyle{remark}
\title[Homomorphism Spaces between Specht modules]{Large Dimension Homomorphism Spaces between Specht modules for Symmetric Groups}
\author{Craig J. Dodge}
\address{Department of Mathematics\\ University at Buffalo, SUNY \\
244 Mathematics Building\\Buffalo, NY~14260, USA}
\email{cjdodge@buffalo.edu}
\newcommand{\abs}[1]{\ensuremath{|   #1   |}}
\thanks{Thanks to my advisor Dr. David J. Hemmer whose support and suggestions were invaluable to this research.}
\begin{document}

\maketitle

\begin{abstract}
Let $F$ be a field of characteristic $p$.  We show that $\Hom_{F\Sigma_n}(S^\lambda , S^\mu)$ can have arbitrarily 
large dimension as $n$ and $p$ grow, where $S^\lambda$ and 
$S^\mu$ are Specht modules for the symmetric group $\Sigma_n$.  Similar results hold for the Weyl modules of the general 
linear group.  Every previously computed example has been at most one-dimensional, with the exception of Specht modules 
over a field of characteristic two.  The proof uses the work of 
Chuang and Tan, providing detailed information about the radical series of Weyl modules in Rouquier blocks.

\end{abstract}

\section{Introduction}

For Specht modules $S^\lambda$ and $S^\sigma$ of the symmetric group $\Sigma_n$, very little is known about the space 
$\Hom_{F\Sigma_n}(S^\lambda, S^\sigma)$. In odd characteristic this homomorphism space is isomorphic to 
$\Hom_{S_n}(\Delta(\lambda), \Delta(\sigma))$ where $\Delta(\lambda)$ is the Weyl module for the Schur algebra, $S_n$.  
In every previously known example 
for the Weyl modules, and every known symmetric group example in odd characteristic, the dimension of this space is at 
most one.   Cox and Parker were able to find an upper bound of one on the dimension 
of the homomorphism space between a 
particular class 
of Weyl modules \cite[4.4]{CoxParkerExtSchurSymm}.  They wrote, ``One might hope that all 
Hom-spaces were at most one-dimensional ... However while we know of no examples of Hom-spaces for induced modules 
which are greater than one dimensional, this may simply be because such Hom-spaces are known in certain very 
special cases.'' \cite[4.6.3]{CoxParkerExtSchurSymm}  Cox and Parker later proved that the dimensions 
of the Hom-space between the Weyl modules for $SL_3(k)$ were also bounded by one \cite[5.1]{CoxParkerWeylHom}.  Fayers 
and Martin were able to improve upon the Carter-Payne homomorphisms, 
which yielded new homomorphisms between Specht modules \cite[5.2.22]{FayersMartinSpechtHom}, but still found no 
examples of homomorphism spaces with dimension greater than one.  Murphy has shown in characteristic two 
that a Specht module can be 
decomposable with arbitrarily many summands, which implies the Hom-space from a Specht module to itself can have 
arbitrarily large dimension \cite[4.4]{MurphyDecomp}.  However this result 
implies no consequences for the Weyl modules of the general linear group.  The author would like to thank the referee for 
his or her comments and suggestions following the original submission of this paper.

\section{Preliminaries}
Let $\Sigma_n$ denote the symmetric group on $n$ elements and $F$ a field of characteristic $p$, where $p$ is an odd prime.  
Let $\Lambda$ denote the set of all partitions.  For a given 
partition $\lambda = (\lambda_1, \lambda_2, ... )$, let $\lambda'$ denote the conjugate partion and let $\lambda \vdash 
\abs{\lambda} $.

We will call $\lambda$ $p$-singular if $\lambda_{i+1} = ...= \lambda_{i+p} \neq 0$ for some $i$ and say $\lambda$ is 
$p$-regular if it is not $p$-singular.  If $\lambda'$ is $p$-regular we say $\lambda$ is $p$-restricted.

Denote $S^{\lambda}$ as the Specht module of $F \Sigma_n$ associated to the partition $\lambda$ \cite[8.4]{Jamesbook}.  
Over a field of characteristic zero the Specht modules are a complete set of non-isomorphic 
irreducible modules, but they are defined over 
any field.  
Let $S(\lambda) = (S^{\lambda})^*$ be its dual module.  
If $\lambda$ is 
$p$-restricted, let $D(\lambda)$ be the unique irreducible quotient of $S(\lambda)$.  The set 
\[
 \{D(\lambda) | \lambda \text{ is } p\text{-restricted  and } \lambda \vdash n \}
\]
forms a complete list of the irreducible modules for $F \Sigma_n$.  We will use this notation for simple modules 
to keep consistent with 
the work of Chuang and Tan \cite[2.3]{ChuaTanRoquierfilt}.  Note that $S^{\lambda} \cong S(\lambda ' ) \otimes \sgn$ where 
$\sgn$ is the sign representation.  Thus for $\lambda$ $p$-regular, $D^{\lambda} \cong D(\lambda ') \otimes \sgn$ 
where $D^{\lambda}$ is unique 
simple quotient of the Specht module $S^\lambda$.

To each partition $\lambda$ and prime $p$ we associate a $p$-core, $core(\lambda)$, and a $p$-quotient 
\[
\bar \lambda = quot(\lambda)=(\lambda^0, ..., \lambda^{p-1}) 
\]
where $\lambda^i \in \Lambda$ for each $i$. That is to say the $p$-quotient of $\lambda$ is a $p$-tuple of partitions.  
Once again we will choose notation to be consistent with Chuang and Tan \cite[2.1]{ChuaTanRoquierfilt}. 
We will choose to display $\lambda$ on a James $p$-abacus \cite[2.7]{JamesKerberbook}, with $p$ runners labeled 
$0,1,...,p-1$, such that if all the beads were pushed to the top of each runner, the first empty node would 
appear in the $0$-runner.
Then $\lambda^i$ can be read off of the $i^{th}$ runner.
Also $\lambda$ has an associated $p$-weight, $w$, 
where $w = \sum_j \abs{\lambda^j}$.  If $\lambda = core(\lambda)$, we say $\lambda$ is a $p$-core.
If $\lambda$ is a $p$-core then the abacus representing $\lambda$ will have all the 
beads pushed to the top of each runner and $\lambda^i=\varnothing$ for all $i$.

Define $\rho$ to be the $p$-core which is represented on the $p$-abacus by placing $w+i(w-1)$ beads on the $i^{th}$ runner.  
Let $\Lambda(\rho,w)$ be the set of partitions with $p$-core 
 $\rho$ and $p$-weight $w$. It is important to note the following two 
properties of partitions in $\Lambda (\rho,w)$:
\begin{enumerate}
 \item $\lambda \text{ is } p\text{-restricted} \Leftrightarrow \lambda^{p-1}= \varnothing$
 \item $\lambda \text{ is } p\text{-regular} \Leftrightarrow \lambda^{0}= \varnothing.$
\end{enumerate} 

We will let $B_w$ be the block of $F\Sigma_{\abs{\rho}+pw}$ 
associated to the $p$-core $\rho$.  The block $B_w$ is called a Rouquier block and has particularly nice properties.
The work of Chuang and Tan uses these good properties in order to compute the radical filtrations 
of certain modules in Rouquier blocks.  We will use Chuang and Tan's work to look for Specht modules within 
the block $B_w$ in order to find a large dimensional Hom-space.  The Specht modules of $B_w$ are the 
Specht modules, $S^\lambda$, such that $\lambda \in \Lambda(\rho,w)$.  For the remainder of the paper we will refer 
to the set of partitions $\Lambda(\rho,w)$ as Rouquier partitions.

\section{Radical Filtrations}

Using the notation of Chuang and Tan \cite[6.1]{ChuaTanRoquierfilt} we will define the radical series polynomial for a 
Specht module.  
For the remainder of the paper let 
$w < p$.  Let 
$\lambda , \sigma \in \Lambda( \rho , w )$ and let $\sigma$ be $p$-restricted.  Define
\[
 \rad_{S, \lambda , \sigma } (v) = \displaystyle \sum_{s \geq 0} [ \rad^s S(\lambda) / \rad^{s+1} S( \lambda ) : D ( \sigma ) ] v^s.
\]
Thus $\rad_{S, \lambda , \sigma } (v)$ is a polynomial that encodes the multiplicity of the irreducible $D(\sigma)$ in 
each radical layer of $S(\lambda)$.  Chuang and Tan proved the following formula to compute this radical polynomial using 
the $p$-quotients of these partitions and the Littlewood-Richardson coefficients. The Littlewood-Richardson coefficient 
associated to the triple of partitions $\lambda, \sigma, \mu$ is denoted
$c(\lambda;\sigma,\mu)$, with the understanding that it is 0 when 
$\abs{\lambda}\neq \abs{\sigma}+ \abs{\mu}$.

\begin{theorem} \cite[6.1]{ChuaTanRoquierfilt}
\label{ChuangTanfilt}
 Let $\lambda , \sigma \in \Lambda( \rho , w )$ and $\sigma$ be $p$-restricted.  Then 
\begin{equation}
\label{eq: ChuangTan}
 \rad_{S,\lambda, \sigma} (v) = v^{\delta(\bar \lambda , \bar \sigma)-\abs{\lambda^{p-1}}} 
\sum_{\substack{\alpha^0 , ... ,\alpha^p \in \Lambda \\  \beta^0,...,\beta^{p-1} \in \Lambda}}
\displaystyle \prod_{j=0}^{p-1} c(\lambda^j;\alpha^j,\beta^j)c(\sigma^j;\beta^j,(\alpha^{j+1})')
\end{equation}
where
\[
 \delta(\bar \lambda, \bar \sigma )= \displaystyle \sum_{j=1}^{p-1} j(\abs{\lambda^j}-\abs{\sigma^j}).
\]

\end{theorem}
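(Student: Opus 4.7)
The plan is to follow the strategy of Chuang and Tan, exploiting the special structure of a Rouquier block $B_w$ in the range $w < p$. In this range $B_w$ has a tightly controlled decomposition matrix, and its representation theory essentially factorizes across the $p$ runners of the abacus. The formula to be proved reflects exactly this factorization: the product over $j$ runs over runners, the auxiliary partitions $\alpha^j,\beta^j$ track how boxes of the $p$-quotient move between adjacent runners, and the $v$-shift records the grading on the radical filtration that each runner contributes.

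First I would invoke a Morita-type reduction relating $B_w$ to a tensor/wreath structure built out of simpler weight-one blocks, one attached to each runner. Under this reduction the Specht module $S(\lambda)$ attached to $\lambda \in \Lambda(\rho,w)$ corresponds to a tensor-product object indexed by the components $\lambda^0,\ldots,\lambda^{p-1}$ of the $p$-quotient, and similarly the simple $D(\sigma)$ corresponds to a tensor product indexed by $\sigma^0,\ldots,\sigma^{p-2}$ (with $\sigma^{p-1}=\varnothing$ since $\sigma$ is $p$-restricted). The $v$-grading on radical layers transfers through this equivalence, with each runner $j$ contributing a shift proportional to $j$; collecting these contributions yields the prefactor $v^{\delta(\bar\lambda,\bar\sigma)-|\lambda^{p-1}|}$.

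Next I would analyze how composition factors of $\rad^s S(\lambda)/\rad^{s+1}S(\lambda)$ pick up a copy of $D(\sigma)$. In the tensor-product model this reduces to a sequence of induction/restriction steps between adjacent runners, and the decomposition of induced products of Specht modules is governed by the Littlewood-Richardson rule. The $\beta^j$ record the portion of $\lambda^j$ that stays in place and is also part of $\sigma^j$, while the $\alpha^j$ record the portion that migrates from runner $j-1$ to runner $j$; the constraint $\alpha^{p}=\varnothing$ together with the $p$-restrictedness of $\sigma$ closes the chain. This produces the product $\prod_{j=0}^{p-1} c(\lambda^j;\alpha^j,\beta^j)\,c(\sigma^j;\beta^j,(\alpha^{j+1})')$, and summing over all valid $\alpha^j,\beta^j$ counts each occurrence of $D(\sigma)$ with the correct multiplicity.

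The main obstacle I anticipate is bookkeeping the grading precisely through the Morita equivalence: one must show that the Loewy length of the tensor product is the sum of the Loewy lengths of its factors (so that the layers actually split as predicted), and that no additional composition factors appear beyond those parametrized by the Littlewood-Richardson sum. Both points rest on the hypothesis $w<p$, which ensures that the weight-one building blocks combine without collisions; without this hypothesis, extra factors would appear and the clean product formula would break. Verifying these two compatibility statements is the technical heart of the argument, while the rest of the proof is a careful translation of combinatorial data on the abacus into the Littlewood-Richardson sum on the right-hand side.
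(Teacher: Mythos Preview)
The paper does not give its own proof of this theorem: it is stated with the citation \cite[6.1]{ChuaTanRoquierfilt} and used as a black box. There is therefore nothing in the present paper to compare your proposal against. Your sketch is a plausible outline of the Chuang--Tan argument (Morita equivalence of the Rouquier block with a wreath-product algebra, whence the factorization over runners and the appearance of Littlewood--Richardson coefficients), but verifying whether your bookkeeping of the grading and the precise form of the $v$-exponent matches their proof requires consulting \cite{ChuaTanRoquierfilt} directly rather than this paper.
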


Observe that this formula implies that for these Rouquier partitions, any given irreducible module 
appears in at most one radical layer, though an irreducible factor
 can appear more than once in a particular layer.
Using this formula it is possible to find Specht modules that have multiple copies of one irreducible factor.  This allows 
for the possibility of a multi-dimensional homomorphism space between the first Specht module, and the Specht module 
corresponding 
to the irreducible factor.

\section{Repeated Composition Factors}

Fix $k>0$ and define $\tau = ( k , k-1 , ... ,1 ) \vdash \frac{k^2+k}{2}$. Let $w=\frac{k^2+k}{2}+1$ and fix $p$ 
such that $w<p$.  
Define
$\gamma , \epsilon \in \Lambda( \rho , w) $, where $\rho$ is the Rouquier core associated to $w$, such that
\[
 \bar \gamma = ( [1] , [ \tau ] , \varnothing , ... , \varnothing )
\]
and 
\[
 \bar \epsilon = ( [\tau ] , [1] , \varnothing, ... , \varnothing ).
\]
We will show that $\dim \Hom_{F \Sigma_n}( S^\gamma , S^\epsilon) = k$.  For this purpose, we must first 
define $\tau(i)$ to be the partition obtained by adding one addable node to the $i^{th}$ row of $\tau$ for 
$1 \leq i \leq k+1$. Define 
$\mu(i) \in \Lambda(\rho, w)$ such that
\[
 \overline{\mu(i)} = ( [\tau(i)] , \varnothing , ... , \varnothing ).
\]

In order to compute $\rad_{S,\gamma, \epsilon} (v)$ using \eqref{eq: ChuangTan}, we need all possible choices for $\alpha$'s and $\beta$'s so that 
\begin{equation}
\label{eq: gammaepprod}
 \displaystyle \prod_{j=0}^{p-1} c(\gamma^j;\alpha^j,\beta^j)c(\epsilon^j;\beta^j,(\alpha^{j+1})') \neq 0.
\end{equation}

Figure 1 can be used to help find $\alpha$'s and $\beta$'s that give non-zero products.  
To each 
diagonal connecting $\gamma^i$ to $\epsilon^i$ we place a $\beta^i$ and to each diagonal connecting $\epsilon^{i-1}$ 
to $\gamma^{i}$ place an $\alpha^i$.  If $c(\lambda;\mu,\sigma) \neq 0$ then $\abs{\lambda}= \abs{\mu}+\abs{\sigma}$.  
Thus in order for \eqref{eq: gammaepprod} to hold, it is neccessary for the magnitude of 
the partition in the top or bottom row be equal to the sum of the magnitudes of the $\alpha$ and $\beta$ on the connecting 
diagonals.  

\begin{figure}[ht]
\begin{center}
\begin{tabular}{ccccccccccc}
 & [$\gamma^0$] & & & & [$\gamma^1$] & & & & $\varnothing$ \\
\slashbox{\tiny{[$\alpha^0$]}}{} &  & \backslashbox{}{\tiny{[$\beta^0$]}} & & \slashbox{\tiny{[$\alpha^1$]}}{} & & \backslashbox{}{\tiny{[$\beta^0$]}} & & \slashbox{\tiny{[$\alpha^2$]}}{} & & \dots   \\
& & & [$\epsilon^0$] & & & & [$\epsilon^1$]
\end{tabular}
\caption{}
\end{center}
\end{figure}

Since $\gamma^j,\epsilon^j = \varnothing$ for all $ j \geq 2$ and we need both  
$c(\gamma^j; \alpha^j, \beta^j) \neq 0$ and $c(\epsilon^j;\beta^j, (\alpha^{j+1})') \neq 0$, 
it is neccessary that
\[
 \alpha^j,\beta^j = \varnothing \text{ for all } j \geq 2.
\]
Since $$\abs{\epsilon^1}=\abs{\beta^1}+\abs{(\alpha^2)'}$$ we get $\beta^1=[1]$.  Observe next that
$$\abs{\gamma^1}= \abs{\alpha^1} + \abs{\beta^1}$$ implies $\abs{\alpha^1} = w-1$.  Similarly
$$\abs{\epsilon^0}=\abs{\beta^0}+\abs{(\alpha^1)'}$$ implies $\beta^0 = [1]$ and 
$$\abs{\gamma^0}= \abs{\alpha^0} + \abs{\beta^0}$$ implies $\alpha^0=\varnothing$.

So all that is left to determine are the possibilities for $\alpha^1$. Recall $\gamma^1=\epsilon^0=\tau$.  
If $c(\tau;\alpha^1, [1]) \neq 0$, the only choices for $\alpha^1$
are the partitions that are obtained by removing one removable node from $\tau$.  
For each choice,
\[
 c(\tau;\alpha^1, [1])=1
\]
and
\[
 c(\tau;[1],(\alpha^1)')=1.
\]
So each of these choices of $\alpha^1$ will make 
\[
 \displaystyle \prod_{j=0}^{p-1} c(\gamma^j;\alpha^j,\beta^j)c(\epsilon^j;\beta^j,(\alpha^{j+1})') =1.
\]
Since $\tau$ has $k$ removable nodes and $\delta(\bar \lambda , \bar \sigma)=w-2$ we get
\begin{equation}
\label{eq: GEradpoly}
 \rad_{S,\gamma, \epsilon} (v)=k v^{w-2}.
\end{equation}

\section{Radical Layers of $S(\gamma)$ and $S(\epsilon)$}

In order to determine if there is a homomorphism from $S(\epsilon)$ into 
$S(\gamma)$,
we will only be concerned with the radical layer of $S(\gamma)$ below 
the layer containing the composition 
factors isomorphic to $D(\epsilon)$.  First we will prove a simple lemma.

\begin{lemma}
The Loewy Length of $S(\gamma)$ is $\leq w$.
\end{lemma}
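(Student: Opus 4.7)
The plan is to exploit the very explicit form of the radical polynomial from Theorem \ref{ChuangTanfilt}. Because \eqref{eq: ChuangTan} writes $\rad_{S,\gamma,\sigma}(v)$ as the monomial $v^{\delta(\bar\gamma,\bar\sigma)-\abs{\gamma^{p-1}}}$ multiplied by a constant Littlewood--Richardson sum, each composition factor $D(\sigma)$ of $S(\gamma)$ is concentrated in the single radical layer indexed by that exponent (this is the observation made by the author immediately after Theorem \ref{ChuangTanfilt}). Therefore $\rad^\ell S(\gamma)=0$ as soon as $\ell$ exceeds every such exponent that occurs with nonzero multiplicity, and it suffices to prove the uniform bound $\delta(\bar\gamma,\bar\sigma)-\abs{\gamma^{p-1}}\leq w-1$ for every $p$-restricted $\sigma\in\Lambda(\rho,w)$.

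Since $\bar\gamma=([1],[\tau],\varnothing,\ldots,\varnothing)$ and $p>w\geq 2$, the $(p-1)$-entry $\gamma^{p-1}$ is empty, so the shift reduces to $\delta(\bar\gamma,\bar\sigma)$ itself. Unpacking the definition of $\delta$ yields
\[
\delta(\bar\gamma,\bar\sigma)=\sum_{j=1}^{p-1}j\bigl(\abs{\gamma^j}-\abs{\sigma^j}\bigr)=\abs{\tau}-\sum_{j=1}^{p-1}j\abs{\sigma^j}.
\]
For $\sigma$ $p$-restricted in $\Lambda(\rho,w)$, property~(1) from Section~2 gives $\sigma^{p-1}=\varnothing$, and each $\abs{\sigma^j}\geq 0$, so the subtracted sum is non-negative. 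Hence $\delta(\bar\gamma,\bar\sigma)\leq\abs{\tau}=\tfrac{k(k+1)}{2}=w-1$.

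Combining the two steps, every composition factor of $S(\gamma)$ sits in radical layer $s$ for some $s\leq w-1$, so $\rad^w S(\gamma)$ has no composition factors and must be zero; this is exactly the claim that the Loewy length is at most $w$. There is essentially no obstacle here beyond reading off the exponent from the Chuang--Tan formula and bounding it, since the one-layer-per-irreducible phenomenon does all the heavy lifting; the same template will be reused in subsequent arguments that pinpoint in which layer $D(\epsilon)$ occurs.
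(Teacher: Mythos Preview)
Your proof is correct and follows essentially the same route as the paper: both bound the exponent in the Chuang--Tan formula by observing that $\delta(\bar\gamma,\bar\sigma)\leq\sum_{j}j\abs{\gamma^j}=\abs{\tau}=w-1$, using that the terms $j\abs{\sigma^j}$ being subtracted are non-negative. The paper compresses this into a single displayed inequality, while you spell out the intermediate observations (that $\gamma^{p-1}=\varnothing$ and the one-layer-per-irreducible consequence of the formula); note that your invocation of property~(1) for $\sigma^{p-1}=\varnothing$ is harmless but unnecessary, since $j\abs{\sigma^j}\geq 0$ already suffices for the bound.
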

\begin{proof}
This follows from \ref{ChuangTanfilt}  since
\[
  \delta(\bar \gamma , \bar \mu) =\displaystyle \sum_{j=1}^{p-1} j(\abs{\gamma^j}-\abs{\mu^j}) \leq \sum_j j\abs{\gamma^j} = w-1
 \]
for all $\mu \in \Lambda$.
\end{proof}

Thus by \eqref{eq: GEradpoly}, the composition factors isomorphic to $D(\epsilon)$ are either on the bottom or the second from the 
bottom radical layer 
of $S(\gamma)$.  Next we will show that they are not in the bottom layer and further examine the 
radical series of both $S(\gamma)$ and $S(\epsilon)$.

It is neccessary to determine which irreducible factors can appear in a layer lower than $D(\epsilon)$. 
Notice that $\delta(\bar \lambda , \bar \mu)<w-1$ if $\abs{\mu^j} \neq 0$ for any $j \geq 1$, so in order for $D(\mu)$ 
to be in the radical layer below the irreducible factors isomorphic to $D(\epsilon)$, 
$\mu^j= \varnothing$ for all $j \geq 1$ and $\abs{\mu^0}=w$.

We need $\mu$ with $p$-quotient $\bar \mu = ( \mu^0 , \varnothing, ... ,\varnothing)$, where $\mu^0 \vdash w$, such that 
\begin{equation}
\label{Gmuprod}
 \displaystyle \prod_{j=0}^{p-1} c(\gamma^j;\alpha^j,\beta^j)c(\mu^j;\beta^j,(\alpha^{j+1})') \neq 0
\end{equation}
for some $\{\alpha^i\}_{i=0}^p$ and $\{ \beta^j \}_{j=0}^{p-1}$.

Since $\mu^j= \varnothing$ for all $j \geq 1$, we get $\alpha^j,\beta^j = \varnothing$ for all $j \geq 2$ and 
$\beta^1 = \varnothing$.  Then $$\abs{\gamma^1}=\abs{\alpha^1}+\abs{\beta^1}$$ implies $\alpha^1 = \gamma^1=\tau$.  Also
$$w=\abs{\mu^0}=\abs{\beta^0}+\abs{\alpha^1}=\abs{\beta^0}+\abs{\tau},$$ thus $\beta^0=[1].$ Finally
$$\abs{\gamma^0}=\abs{\alpha^0}+\abs{\beta^0},$$ forcing $\alpha^0=\varnothing$. So in order for \eqref{Gmuprod} hold, 
it is neccessary to find a $\mu^0$ so that $c(\mu^0;[1],(\tau)') \neq 0$.  The possible choices for 
$\mu^0$ are the partitions obtained by adding one addable node to the partition $\tau$, which are the partitions 
$\{  \tau(i) \}_{i=1}^{k+1}$.  These choices for $\mu^0$ correspond to the partitions $\mu(i)$ and for each 
$1 \leq i \leq k+1$,
\[
   \displaystyle \prod_{j=0}^{p-1} c(\gamma^j;\alpha^j,\beta^j)c(\mu(i)^j;\beta^j,(\alpha^{j+1})') =1.         
\]
Thus for each $i$ we get
\[
 \rad_{S,\gamma,\mu(i)} = 1 v^{w-1}.
\]
So for each partition $\mu(i)$ we get one irreducible factor $D_i:=D(\mu(i))$ 
in the bottom layer of $S(\gamma)$, below the layer containing the $k$ copies of 
$D(\epsilon)$.

Next we will compute the entire radical series for $S(\epsilon)$, which will be shown to only have two layers. To find the 
the entire radical filtration of $S(\epsilon)$ it will be neccessary to find all possible $\mu$ such that 
\begin{equation}
\label{epprod}
 \displaystyle \prod_{j=0}^{p-1} c(\epsilon^j;\alpha^j,\beta^j)c(\mu^j;\beta^j,(\alpha^{j+1})') \neq 0.
\end{equation}

Since $\epsilon^j = \varnothing$ for all $j \geq 2$, we get $\alpha^j, \beta^j= \varnothing$ for all $j \geq 2$, making 
$\mu^j= \varnothing$ for all $j \geq 2$. Because 
$\epsilon^1 =[1]$ and $\abs{\epsilon^1}=\abs{\alpha^1}+\abs{\beta^1}$, we may break down the problem into two cases.

Case 1: $\alpha^1=\varnothing$ and $\beta^1=[1]$, in this case $\mu^1=[1]$.  Since
$$\abs{\mu^0}=\abs{\beta^0}+\abs{\alpha^1}=\abs{\beta^0}$$
and $\abs{\epsilon^0}=\abs{\beta^0}$, we conclude $\mu^0=\beta^0=\epsilon^0$ which forces $\alpha^0=\varnothing$. 
In this case 
it is clear $\mu = \epsilon$ and the product in equation \ref{epprod} is equal to one.  
Since $\delta(\bar \epsilon, \bar \epsilon) = 0$, we know there is a lone 
copy of the irreducible $D(\epsilon)$ in the top layer of $S(\epsilon)$, 
as expected for $p$-restricted $\epsilon$. 

Case 2: $\alpha^1=[1]$ and $\beta^1=\varnothing$. The equality
$$\abs{\mu^1}=\abs{\beta^1}+\abs{\alpha^2}$$ 
forces $\mu^1 = \varnothing$ and thus $\abs{\mu^0}=w$. Similarly
$$\abs{\mu^0}=\abs{\beta^0}+\abs{\alpha^1}$$ implies $\abs{\beta^0}=w-1=\abs{\epsilon^0}$, which forces 
$\alpha^0 = \varnothing$ 
and $\beta^0=\epsilon^0=\tau$. So the only choices for $\mu^0$ to make $c(\mu^0;\tau,[1]) \neq 0$ are the partitions 
that are made by adding one addable node to $\tau$, which are partitions 
$\{  \tau(i) \}_{i=1}^{k+1}$.  These choices for $\mu^0$ 
correspond to partitions $\mu(i)$
and for each of these $\mu(i)$ 
\[
 \rad_{S,\epsilon,\mu(i)}(v) = 1 v^1.
\]
So the bottom layer of $S(\epsilon)$ has the exact same irreducible factors as those 
in the bottom radical layer of $S(\gamma)$, namely
$\{ D_i \}_{i=1}^{k+1}$. 
Therefore $S(\epsilon)$ consists of only two radical layers, a top layer consisting of a lone $D(\epsilon)$ and a bottom layer with the same 
irreducible composition factors as  
the bottom radical layer of $S(\gamma)$. The following diagram illustrates the radical layers of $S(\gamma)$ and 
$S(\epsilon)$:
\[
S(\gamma) \cong
 \begin{array}{| c  c  c  c |}
\hline
D(\gamma) & & & \\ \hline
 & \vdots & & \\ 
 & \vdots & &  \\ \hline
\displaystyle \bigoplus_{i=1}^{k} D(\epsilon) & \oplus ... & \quad & \quad  \\ \hline
 \displaystyle \bigoplus_{i=1}^{k+1} D_i & & & \\ \hline
\end{array}
\qquad 
S(\epsilon) \cong
 \begin{array}{| c  c  c  c |}
\hline
 D(\epsilon) &  & &   \\ \hline
 \displaystyle \bigoplus_{i=1}^{k+1} D_i & & & \\ \hline
\end{array}
\]
\[
\text{Radical series of }S(\gamma),S(\epsilon).
\]
In the diagrams above we label each layer from top to bottom, $\{0,1,2,... \}$.   
The $i^{th}$ layer of the stacked boxes represents the semisimple module 
$\rad^i S(\gamma) / \rad^{i+1} S(\gamma)$ and $\rad^i S(\epsilon) / \rad^{i+1} S(\epsilon)$ respectively.

\section{Specht Module Homomorphism}

Next we will show that each copy of $D(\epsilon)$ in the second to bottom radical layer of $S(\gamma)$ will give a 
$F \Sigma_n$-homomorphism from $S(\epsilon)$ to $S(\gamma)$ that is linearly independent from the rest. To do this 
we will use another result of Chuang and Tan.

\begin{theorem} \cite[6.3]{ChuaTanRoquierfilt}  \textup{(Ext-quiver)}
\label{Extquiver}
Let $\lambda , \sigma \in \Lambda(\rho,w)$ where $\rho$ is the Rouquier Core corresponding to weight $w$ and 
$\lambda, \sigma$ are both $p$-restricted.  Then
\[
 \dim \Ext_{F \Sigma_n}^1(D(\lambda),D(\sigma)) \leq 1.
\]
\end{theorem}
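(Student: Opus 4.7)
The approach is to bound $\dim \Ext^1_{F\Sigma_n}(D(\lambda),D(\sigma))$ by the multiplicity of $D(\sigma)$ in the second radical layer of $S(\lambda)$ (or, dually, of $S(\sigma)$), and then use Theorem~\ref{ChuangTanfilt} to force that multiplicity to be at most one.

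First I would establish the auxiliary bound
\[
 \dim \Ext^1_{F\Sigma_n}(D(\lambda),D(\sigma)) \le [\rad S(\lambda)/\rad^2 S(\lambda) : D(\sigma)].
\]
Since $\lambda$ is $p$-restricted, $D(\lambda)$ is the head of $S(\lambda)$, so applying $\Hom(-,D(\sigma))$ to $0\to \rad S(\lambda) \to S(\lambda) \to D(\lambda)\to 0$ and noting $\Hom(S(\lambda),D(\sigma))=0$ for $\lambda\ne \sigma$ produces an injection of $\Ext^1(D(\lambda),D(\sigma))$ into $\Hom(\rad S(\lambda)/\rad^2 S(\lambda),D(\sigma))$, modulo the image of $\Ext^1(S(\lambda),D(\sigma))$. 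The latter vanishes in a Rouquier block by the quasi-hereditary structure on the Schur algebra (Weyl modules are standard, and $\Ext^1(\Delta(\lambda),L(\sigma))=0$ unless $\sigma>\lambda$), transported via the Schur functor; if $\sigma>\lambda$ one argues dually with $S(\sigma)$.

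Second, I would apply Theorem~\ref{ChuangTanfilt}. Since $\lambda$ is $p$-restricted, $\abs{\lambda^{p-1}}=0$, so $\rad_{S,\lambda,\sigma}(v)$ is a monomial of degree $\delta(\bar\lambda,\bar\sigma)$; hence $D(\sigma)$ appears in the second radical layer only if $\delta(\bar\lambda,\bar\sigma)=1$, i.e.\ $\sum_{j=1}^{p-1} j(\abs{\lambda^j}-\abs{\sigma^j})=1$. Coupled with $\sigma^{p-1}=\varnothing$ and $\abs\lambda=\abs\sigma$, this is highly restrictive: $\bar\lambda$ and $\bar\sigma$ must agree on almost every runner, differing only by a minimal redistribution of boxes between two runners.

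Third, for each such near-equal pair $(\bar\lambda,\bar\sigma)$ I would compute
\[
 \sum_{\alpha^\bullet,\beta^\bullet} \prod_{j=0}^{p-1} c(\lambda^j;\alpha^j,\beta^j)\, c(\sigma^j;\beta^j,(\alpha^{j+1})')
\]
and show it equals $0$ or $1$. The magnitude-counting argument in Section~4 (used to pin down $\alpha^\bullet,\beta^\bullet$ in $\rad_{S,\gamma,\epsilon}$) applies verbatim: the constraint $\delta(\bar\lambda,\bar\sigma)=1$ forces $\abs{\alpha^j},\abs{\beta^j}\in\{0,1\}$, so every LR-coefficient that appears is $0$ or $1$ by Pieri's rule, and at most one tuple contributes. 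The main obstacle is the case analysis here: one must enumerate all admissible $(\bar\lambda,\bar\sigma)$ with $\delta=1$ and $\lambda^{p-1}=\sigma^{p-1}=\varnothing$ --- including configurations where the box shift spans non-adjacent runners and forces intermediate $\alpha^j$ or $\beta^j$ to be nontrivial --- and rule out any case producing two linearly independent contributing tuples. A secondary difficulty is making the Ext-to-layer reduction of the first step precise on the symmetric group side; the cleanest route is to pass through the Schur functor and BGG reciprocity on the Schur algebra, then read the result back.
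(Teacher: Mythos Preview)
The paper does not prove this theorem; it is quoted verbatim from Chuang--Tan \cite[6.3]{ChuaTanRoquierfilt} and used as a black box, so there is no in-paper argument to compare your proposal against.

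That said, your combinatorial step is essentially right but misstated. From $|\lambda^j|=|\alpha^j|+|\beta^j|$ and $|\sigma^j|=|\beta^j|+|\alpha^{j+1}|$ one gets $|\alpha^m|=\sum_{j\ge m}(|\lambda^j|-|\sigma^j|)$, and an Abel summation yields $\delta(\bar\lambda,\bar\sigma)=\sum_{m\ge 1}|\alpha^m|$. Hence $\delta=1$ forces exactly one $\alpha^m=[1]$ and all other $\alpha^j=\varnothing$; every $\beta^j$ is then \emph{uniquely determined} (equal to $\lambda^j$ for $j\ne m$, and to $\sigma^m$ for $j=m$). So at most one tuple contributes and its product is $0$ or $1$ by Pieri --- but your assertion that $|\beta^j|\in\{0,1\}$ is false, and your worry about ``non-adjacent runners'' is moot: the analysis forces $\bar\sigma$ to be $\bar\lambda$ with one box moved from runner $m$ to runner $m-1$, so non-adjacent shifts simply give zero.

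The genuine gap is your first step. Bounding $\Ext^1(D(\lambda),D(\sigma))$ by $[\rad S(\lambda)/\rad^2 S(\lambda):D(\sigma)]$ needs $\Ext^1_{F\Sigma_n}(S(\lambda),D(\sigma))=0$ when $\sigma\not>\lambda$, and transporting this from the Schur algebra through the Schur functor is exactly the delicate point you flag; it is not free. Chuang--Tan sidestep this entirely: in the same paper they also determine the radical filtration of the projective cover $P(\lambda)$, and since $\dim\Ext^1(D(\lambda),D(\sigma))=[\rad P(\lambda)/\rad^2 P(\lambda):D(\sigma)]$ exactly, the bound drops out of their projective formula with the same $\delta=1$ analysis and no auxiliary Ext vanishing needed.
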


Also if $P(\epsilon)$ is the projective cover of $D(\epsilon)$ and $\sigma$ is any $p$-restricted partition, we have
\[
 \dim \Ext_{F \Sigma_n}^1 ( D(\epsilon) , D(\sigma)) = \left[ \rad P(\epsilon) / \rad^2 P(\epsilon) : D(\sigma) \right] . 
\]
So there is only one copy of each of $D_i$ in the second radical layer of $P(\epsilon)$.  
Let 
$$\rad P(\epsilon) / \rad^2 P(\epsilon) \cong (\displaystyle \bigoplus_{i=1}^{k+1} D_i) \oplus ( \displaystyle \bigoplus_{j=1}^{l} V_j)$$ 
where $V_j$ are irreducible factors and $l$ is some non-negative integer.  Then $V_j \not \cong D_i$ for all $i,j$ by Theorem 
\ref{Extquiver}. 
Also we have an exact sequence
\begin{equation}
 \label{eq: exacthom}
0 \longrightarrow Q \longrightarrow P(\epsilon) \longrightarrow S(\epsilon) \longrightarrow 0. 
\end{equation}

Since $S(\epsilon)$ has radical length of two, we have 
\begin{equation}
 \label{Qexact}
0 \longrightarrow \rad^2 P(\epsilon) \longrightarrow Q \longrightarrow \displaystyle \bigoplus_{j=1}^{l} V_j \longrightarrow 0 .
\end{equation}

 Because there are $k$ copies of the irreducible factor $D(\epsilon)$ in $S(\gamma)$ we have
\[
 \dim \Hom ( P(\epsilon),S(\gamma)) = k.
\]
We want to show these homomorphisms all factor through $S(\epsilon)$, so let 
$\{ \phi_i \}_{i=1}^{k}$ be a basis for  $\Hom(P(\epsilon),S(\gamma))$.

\begin{lemma}
$Q \subseteq \Ker \phi_i$ for all $i$.
\end{lemma}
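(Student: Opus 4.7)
The plan is to exploit the very short radical filtration of the image of each $\phi_i$ inside $S(\gamma)$ and then use the disjointness of the composition factors $\{V_j\}$ and $\{D_i\}$ guaranteed by Theorem~\ref{Extquiver}.

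First I would record that $Q \subseteq \rad P(\epsilon)$: since $\epsilon$ is $p$-restricted, $S(\epsilon) = P(\epsilon)/Q$ has simple head $D(\epsilon)$, so the surjection $P(\epsilon) \twoheadrightarrow S(\epsilon)$ is an isomorphism on heads and therefore $Q$ lies in the radical. Next I would locate $\phi_i(P(\epsilon))$ inside the radical filtration of $S(\gamma)$. A basis element $\phi_i$ of $\Hom(P(\epsilon), S(\gamma))$ sends the top generator of $P(\epsilon)$ to an element lifting one of the $k$ copies of $D(\epsilon)$ in the second-from-bottom layer of $S(\gamma)$; since the $D(\epsilon)$-layer is $\rad^{w-2} S(\gamma)/\rad^{w-1} S(\gamma)$, this generator lies in $\rad^{w-2} S(\gamma)$, and hence so does the cyclic submodule it generates:
\[
\phi_i(P(\epsilon)) \subseteq \rad^{w-2} S(\gamma).
\]

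Applying the Jacobson radical then gives $\phi_i(\rad^j P(\epsilon)) \subseteq \rad^{w-2+j} S(\gamma)$ for each $j\geq 0$. In particular $\phi_i(\rad^2 P(\epsilon)) \subseteq \rad^w S(\gamma) = 0$ (using the lemma that the Loewy length of $S(\gamma)$ is at most $w$, which applies equally to $S(\lambda)$ for any $\lambda\in\Lambda(\rho,w)$ by the same $\delta$-computation). Consequently the restriction $\phi_i|_Q$ factors through the quotient $Q/\rad^2 P(\epsilon) \cong \bigoplus_{j=1}^{l} V_j$ coming from the exact sequence \eqref{Qexact}, and its image lies in $\rad \phi_i(P(\epsilon)) \subseteq \rad^{w-1} S(\gamma) = \bigoplus_{i=1}^{k+1} D_i$, the bottom layer of $S(\gamma)$.

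Thus $\phi_i|_Q$ induces a map of semisimple modules
\[
\bar\phi_i \colon \bigoplus_{j=1}^{l} V_j \longrightarrow \bigoplus_{i=1}^{k+1} D_i.
\]
The already-established fact that $V_j\not\cong D_i$ for any $i,j$ (a direct consequence of Theorem~\ref{Extquiver}, since each $D_i$ already appears once in $\rad P(\epsilon)/\rad^2 P(\epsilon)$) forces $\bar\phi_i = 0$, so $\phi_i(Q)=0$ as required. The only subtlety I expect is the bookkeeping for step two, namely the careful argument that a cyclic submodule generated by an element of $\rad^{w-2} S(\gamma)$ has its radical contained in $\rad^{w-1} S(\gamma)$; this is immediate from $\rad N = JN$ for finitely generated $N$, but it is the step that ties the abstract Loewy filtration data to the concrete constraint on $\phi_i$.
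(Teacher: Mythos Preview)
Your argument is correct and is essentially the same as the paper's: both show $\rad^2 P(\epsilon)\subseteq\Ker\phi_i$ from the fact that the image of $\phi_i$ lies in $\rad^{w-2}S(\gamma)$ (which has Loewy length~$2$), and then kill the remaining semisimple piece $Q/\rad^2 P(\epsilon)\cong\bigoplus_j V_j$ by noting, via Theorem~\ref{Extquiver}, that no $V_j$ occurs in the bottom layer $\rad^{w-1}S(\gamma)=\bigoplus_i D_i$. Your write-up simply makes explicit the steps the paper leaves implicit (notably $Q\subseteq\rad P(\epsilon)$ and the clean justification that $\phi_i(P(\epsilon))\subseteq\rad^{w-2}S(\gamma)$); the one loose phrase is that a basis element $\phi_i$ need not hit a single copy of $D(\epsilon)$, but your actual argument only uses that $D(\epsilon)$ does not occur above layer $w-2$, which is what matters.
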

\begin{proof}
$\rad^2 P(\epsilon) \subseteq \Ker \phi_i$ since $\rad^{w-2} S(\gamma)$ has Loewy length 2, but there are no composition 
factors isomorphic to any $V_j$ in $\rad^{w-1} S(\gamma)$, so $V_j \subseteq \Ker \phi_i$ for all $i,j$ by \ref{Qexact}. 
Thus $Q \subseteq \Ker \phi_i$
\end{proof}

\begin{corollary}
 There exists a well defined map $\tilde \phi_i : S(\epsilon) \to S(\gamma)$ defined by 
\[
 \tilde \phi_i ( u + Q ) = \phi_i ( u).
\]
\end{corollary}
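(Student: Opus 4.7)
The plan is to derive the corollary as an immediate consequence of the preceding lemma together with the short exact sequence \eqref{eq: exacthom}. Since the sequence $0 \to Q \to P(\epsilon) \to S(\epsilon) \to 0$ identifies $S(\epsilon)$ with the quotient $F\Sigma_n$-module $P(\epsilon)/Q$, every element of $S(\epsilon)$ has a representation of the form $u+Q$ for some $u \in P(\epsilon)$, and the formula $\tilde\phi_i(u+Q) = \phi_i(u)$ is exactly the standard factoring of a homomorphism through a quotient by a submodule contained in its kernel.

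First I would verify that $\tilde\phi_i$ is well defined on cosets. If $u+Q = u'+Q$, then $u - u' \in Q$. By the preceding lemma, $Q \subseteq \Ker \phi_i$, so $\phi_i(u-u') = 0$ and hence $\phi_i(u) = \phi_i(u')$. This makes the value $\tilde\phi_i(u+Q)$ independent of the chosen representative. Next I would record that $\tilde\phi_i$ is $F\Sigma_n$-linear: for $g \in \Sigma_n$ and $a,b \in F$,
\[
\tilde\phi_i\bigl(g \cdot (a(u+Q) + b(u'+Q))\bigr) = \phi_i(g(au+bu')) = a\,g\phi_i(u) + b\,g\phi_i(u'),
\]
using that $\phi_i$ is already an $F\Sigma_n$-homomorphism and that the action on $P(\epsilon)/Q$ is induced from the action on $P(\epsilon)$.

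There is essentially no hard step here; the entire content of the corollary is packaged in the lemma just proved, and the rest is the universal property of the quotient. If any step requires more than a line of justification it is only the identification $S(\epsilon) \cong P(\epsilon)/Q$, which is immediate from exactness of \eqref{eq: exacthom}. I would therefore present the proof in a single short paragraph, citing the lemma for $Q \subseteq \Ker\phi_i$ and invoking the factoring property of the quotient to produce $\tilde\phi_i$.
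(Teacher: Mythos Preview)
Your proposal is correct and takes essentially the same approach as the paper: both use the preceding lemma $Q\subseteq\Ker\phi_i$ together with the exact sequence \eqref{eq: exacthom} to factor $\phi_i$ through the quotient $P(\epsilon)/Q\cong S(\epsilon)$. The paper's proof is simply a terse one-line version of what you have written out in detail.
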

\begin{proof}
We can see that $\tilde \phi_i$ is induced from the exact sequence \eqref{eq: exacthom} and the map $\phi_i$.
\[
 \begin{array}{ccccccccc}
0 & \longrightarrow & Q & \longrightarrow & P(\epsilon) & \longrightarrow & S(\epsilon) & \longrightarrow & 0\\
 &      &   &   & \scriptstyle{\phi_i} \displaystyle \downarrow & \swarrow \scriptstyle  \tilde{ \phi_i} & \\
 & & & & S(\gamma)
\end{array}
\]
\end{proof}

Now it is left to show $\tilde \phi_i$ are linearly independent, so suppose 
\[
 0 = \displaystyle \sum_{i=1}^k a_i \tilde \phi_i .
\]
Then
\[
 0 = \displaystyle \sum_{i=1}^k a_i \tilde \phi_i ( u + Q) = \displaystyle \sum_{i=1}^k a_i \phi_i (u)
\]
but $u$ was arbitrary and the $\phi_i$ are linearly independent, so $a_i=0$ for all $i$. Therefore 
$\{ \tilde \phi_i \}_{i=1}^k$ 
are linearly independent.

Thus there are $k$ linearly independent homomorphisms from $S(\epsilon)$ to $S(\gamma)$,
 which implies
\[
 \dim \Hom_{F \Sigma_n} ( S(\epsilon) , S(\gamma) ) \geq k.
\]
Although if we apply $\Hom_{F \Sigma_n} (-, S(\gamma))$ to the exact sequence \eqref{eq: exacthom} we get
\[
 0 \to \Hom_{F \Sigma_n}( S(\epsilon) , S(\gamma)) \to \Hom_{F \Sigma_n}( P(\epsilon) , S(\gamma) ) .
\]
Therefore $k$ is also an upper bound on the dimension, so 

\[
 \dim \Hom_{F \Sigma_n} ( S(\epsilon) , S(\gamma) ) = k.
\]

Since $S(\lambda)=(S^\lambda)^*$, we obtain
\[
 \dim \Hom_{F \Sigma_n} ( S(\epsilon) , S(\gamma) ) = \dim \Hom_{F \Sigma_n} (S^{\gamma},S^{\epsilon}).
\]
We can conclude the following theorem.

\begin{theorem} 
 Let $\tau = ( k , k-1 , ... ,1 ) \vdash \frac{k^2+k}{2}$ and let $w=\frac{k^2+k}{2}+1<p$. Let 
$\gamma , \epsilon \in B( \rho , w) $ where $\rho$ is the Rouquier core associated to $w$, such that
\[
 \bar \gamma = ( [1] , [ \tau ] , \varnothing , ... , \varnothing )
\]
and 
\[
 \bar \epsilon = ( [\tau ] , [1] , \varnothing, ... , \varnothing )
\]
Then 
\[
 \dim \Hom_{F \Sigma_n} (S^{\gamma},S^{\epsilon}) = k.
\]

\end{theorem}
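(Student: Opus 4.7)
The plan is to reduce the statement to computing $\dim \Hom_{F\Sigma_n}(S(\epsilon), S(\gamma))$ via the duality $S(\lambda) = (S^\lambda)^*$, and then attack that via the radical filtrations of $S(\gamma)$ and $S(\epsilon)$ using Chuang--Tan's formula in Theorem \ref{ChuangTanfilt}. The conceptual payoff is that inside a Rouquier block with $w < p$, composition factor multiplicities become explicit Littlewood--Richardson computations on the $p$-quotients, and each simple appears in at most one radical layer.

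My first step is to evaluate $\rad_{S,\gamma,\epsilon}(v)$ directly from \eqref{eq: ChuangTan}. Because $\gamma^j = \epsilon^j = \varnothing$ for $j \geq 2$, nonvanishing forces $\alpha^j = \beta^j = \varnothing$ for $j \geq 2$, and matching magnitudes on the diagonals should cascade to give $\alpha^0 = \varnothing$, $\beta^0 = \beta^1 = [1]$, with only $\alpha^1 \vdash w-1$ still free. The constraint $c(\tau;\alpha^1,[1]) \neq 0$ then identifies $\alpha^1$ as a single-node removal from $\tau$; since $\tau = (k,k-1,\ldots,1)$ has exactly $k$ removable nodes, this yields $\rad_{S,\gamma,\epsilon}(v) = k v^{w-2}$, placing $k$ copies of $D(\epsilon)$ in a single radical layer of $S(\gamma)$.

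Second, I need to pin down the layer below these copies of $D(\epsilon)$. A quick Loewy length bound of $w$ for $S(\gamma)$ (via $\delta(\bar\gamma,\bar\mu) \leq w-1$) forces the copies of $D(\epsilon)$ into the penultimate layer, and an analogous analysis using \eqref{eq: ChuangTan} should show that the bottom layer of $S(\gamma)$ is precisely $\bigoplus_{i=1}^{k+1} D(\mu(i))$, each with multiplicity one. A parallel computation on $\rad_{S,\epsilon,\mu}(v)$ should reveal that $S(\epsilon)$ has only two layers, with exactly the same simples $\{D(\mu(i))\}_{i=1}^{k+1}$ in its bottom layer.

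The final and trickiest step is to upgrade these composition factor counts into a Hom-space dimension. I would lift to the projective cover via the sequence $0 \to Q \to P(\epsilon) \to S(\epsilon) \to 0$; since $S(\gamma)$ contains $k$ copies of $D(\epsilon)$, we get $\dim \Hom(P(\epsilon), S(\gamma)) = k$. The main obstacle is verifying that every such map kills $Q$ so it descends to $S(\epsilon)$: this requires knowing that the other simples $V_j$ appearing in $\rad P(\epsilon)/\rad^2 P(\epsilon)$ alongside the $D(\mu(i))$ cannot be any of the $D(\mu(i))$ themselves, which is exactly what Theorem \ref{Extquiver} provides (each $D(\mu(i))$ sits there with multiplicity one). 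Combined with the fact that $\rad^{w-1} S(\gamma)$ contains no $V_j$, this will force $Q \subseteq \Ker \phi_i$ for each basis element $\phi_i$, giving $k$ linearly independent maps $S(\epsilon) \to S(\gamma)$. Applying $\Hom(-, S(\gamma))$ to the same exact sequence supplies the matching upper bound, so the Hom-space has dimension exactly $k$.
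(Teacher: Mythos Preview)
Your proposal is correct and follows essentially the same route as the paper: compute $\rad_{S,\gamma,\epsilon}(v)=kv^{w-2}$ and the bottom layers of $S(\gamma)$ and $S(\epsilon)$ via Theorem~\ref{ChuangTanfilt}, then lift to $P(\epsilon)$ and use Theorem~\ref{Extquiver} together with the exact sequence $0\to Q\to P(\epsilon)\to S(\epsilon)\to 0$ to show all $k$ maps factor through $S(\epsilon)$, with the matching upper bound coming from left-exactness of $\Hom(-,S(\gamma))$. The only cosmetic difference is that the paper spells out the Loewy-length-two argument showing $\rad^2 P(\epsilon)\subseteq\Ker\phi_i$ before handling the $V_j$'s, but your outline captures the same content.
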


This has a direct consequence for Schur algebras. Let $S_n = S_{F}(n,n)$ be the Schur algebra over $F$ associated to 
homogeneous polynomial representations of $GL_n$ of degree $n$ \cite{GreenPolyGLn2ndedwithappendix}.  
Define $\Delta(\lambda)$ to be the Weyl Module 
associated 
to the partion $\lambda$ and $L(\lambda)$ is the unique irreducible quotient of $\Delta(\lambda)$. Carter and Lusztig 
\cite[3.7]{CarterLusztigModrepGlnSymm} 
proved that  if $\lambda$ and $\sigma$ are partitions of $n$ with $p$ odd,
\[ 
\dim \Hom_{F \Sigma_n} (S^{\lambda},S^{\sigma}) = \dim \Hom_{S_n} (\Delta(\lambda),\Delta(\sigma)). 
\]

Therefore our previous result gives following theorem.
\begin{theorem}
 Let $\tau = ( k , k-1 , ... ,1 ) \vdash \frac{k^2+k}{2}$ and let $w=\frac{k^2+k}{2}+1<p$. Let 
$\gamma , \epsilon \in B( \rho , w) $ where $\rho$ is the Rouquier core associated to $w$, such that
\[
 \bar \gamma = ( [1] , [ \tau ] , \varnothing , ... , \varnothing )
\]
and 
\[
 \bar \epsilon = ( [\tau ] , [1] , \varnothing, ... , \varnothing )
\]
Then 

\[
 \dim \Hom_{S_n} (\Delta(\gamma),\Delta(\epsilon)) = k.
\]
\end{theorem}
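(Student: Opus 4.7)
The plan is to deduce this theorem directly from the previous one via the Carter-Lusztig isomorphism; essentially no new argument is required beyond the work already done for symmetric groups.

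First, I would invoke the previous theorem to conclude $\dim \Hom_{F\Sigma_n}(S^\gamma, S^\epsilon) = k$ under the stated hypotheses on $\tau$, $w$, $\gamma$, and $\epsilon$. Since the standing assumption of the paper (fixed at the start of Section~2) is that $p$ is an odd prime, and since $w < p$ is imposed here, the hypotheses needed for the Carter-Lusztig comparison are all in place. In particular, both $\gamma$ and $\epsilon$ lie in $\Lambda(\rho, w)$, hence are partitions of the same integer $n = |\rho| + pw$, so the comparison theorem can be applied to the pair $(\gamma, \epsilon)$.

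Next, I would cite Carter-Lusztig \cite[3.7]{CarterLusztigModrepGlnSymm}, which asserts that for $p$ odd and any partitions $\lambda, \sigma \vdash n$,
\[
\dim \Hom_{F\Sigma_n}(S^\lambda, S^\sigma) = \dim \Hom_{S_n}(\Delta(\lambda), \Delta(\sigma)).
\]
Specializing $\lambda = \gamma$ and $\sigma = \epsilon$ and combining with the previous theorem yields $\dim \Hom_{S_n}(\Delta(\gamma), \Delta(\epsilon)) = k$.

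The main obstacle is effectively nonexistent here: this is a packaging corollary. All of the substantive work lies in the previous theorem, which combined Chuang-Tan's radical filtration formula (Theorem \ref{ChuangTanfilt}), the Ext-quiver bound (Theorem \ref{Extquiver}), and a projective-cover argument to pin down the Specht Hom-space dimension exactly as $k$. The transfer to Weyl modules is then purely mechanical via Carter-Lusztig, and gives the announced family of arbitrarily large Hom-spaces between Weyl modules as $k$ (and hence $p$) grow.
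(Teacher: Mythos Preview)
Your proposal is correct and matches the paper's own argument: the theorem is stated immediately after invoking Carter--Lusztig \cite[3.7]{CarterLusztigModrepGlnSymm} and is deduced as a direct corollary of the preceding Specht-module result. No additional ideas are needed beyond this transfer.
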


The problem of removing the dependence on prime $p$ is left open.

\begin{problem}
\label{OpenProb}
 Fix an odd prime $p$ and field $F$ of characteristic $p$. For arbitrary Specht modules $S^{\sigma},S^{\lambda}$, is 
\[
 \dim \Hom_{F \Sigma_n} (S^{\lambda},S^{\sigma}) 
\]
unbounded as $n$ is allowed to grow?

\end{problem}

Note that the technique used in this paper will be of no use in solving \ref{OpenProb}.  
The formulas developed by Chuang and 
Tan require $w <p$,  which for a fixed $p$ bounds $k$ by $k(k+1)/2 +1 < p$.

It is interesting to note how large the symmetric group must be to find these Specht modules.  
If $\lambda \in \Lambda(\rho,k(k+1)/2+1)$ then
\[
 \abs{\lambda}= \frac{1}{96}(p^3-p)(k^2+k)(p(k^2+k)+4) + p(k(k+1)/2+1).
\]
Which means for $k=2$ and $p=5$, which is the smallest case that we constructed, that $n=275$.  As we let 
$k$ increase, $n \approx O(k^{12})$.  This 
gives insight as to why examples of large dimensional Hom-spaces are so hard to find and 
demonstrates the power of Chuang and Tan's work to handle 
very large partitions.

\bibliography{references0210}
\bibliographystyle{plain}

\end{document}